\documentclass[11pt,reqno]{article}
\usepackage[T1]{fontenc}
\usepackage{amsmath,amssymb,amsthm}
\usepackage[margin=1.25in]{geometry}
\usepackage[colorlinks=true,linkcolor=blue,citecolor=blue]{hyperref}

\theoremstyle{plain}
\newtheorem{theorem}{Theorem}
\newtheorem{lemma}[theorem]{Lemma}
\newtheorem{corollary}[theorem]{Corollary}

\theoremstyle{definition}
\newtheorem{remark}[theorem]{Remark}

\newcommand{\R}{\mathbb{R}}
\newcommand{\N}{\mathbb{N}}
\newcommand{\Om}{\Omega}
\newcommand{\Ht}{\widetilde H}
\newcommand{\dsp}{\mathrm{DSp}}

\newcommand{\dr}{\mathrm{DR}}

\newcommand{\dd}{\mathrm{Dom}}

\newcommand{\ip}[2]{(#1,#2)}

\begin{document}

\title{Truncations for fractional Laplacians}
\author{Egor Ignatev\footnote{St.Petersburg State University, 
Universitetskii pr. 28, St.Petersburg, 198504, Russia},~ 
\setcounter{footnote}{6}
Alexander I. Nazarov\footnote{
St.Petersburg Dept. of Steklov Institute, Fontanka 27, St.Petersburg, 191023, 
Russia. Supported by the Ministry of Science and Higher Education of the Russian 
Federation (agreement 075-15-2025-344 dated 29/04/2025 for Saint Petersburg 
Leonhard Euler International Mathematical Institute)}, \\Pavel Nichitenko$^*$ 
and Artur Tursunbaev$^*$
}

\date{}

\maketitle

\begin{abstract}
Let $\Om\subset\R^n$ be a bounded Lipschitz domain. We prove and widely 
generalize a conjecture of A.\,I.~Nazarov \cite{Naz21}: for $s\in(1,\frac 32)$ 
the quadratic form $Q^{\dsp}_s[u]$ of the spectral fractional Dirichlet 
Laplacian
strictly increases under the map $u\mapsto|u|$ provided $u\in\Ht^s(\Om)$ 
changes sign in $\Omega$.
\end{abstract}

\section{Introduction and main results}

Let $\Om\subset\R^n$ be a bounded strictly Lipschitz domain. Denote by
$\lambda_j$, $j\in\N$, the nondecreasing sequence of eigenvalues of the 
Dirichlet Laplacian in $\Om$ counting with their multiplicities, and by 
$\varphi_j$ corresponding eigenfunctions forming a complete orthonormal system 
in $L_2(\Om)$.

The \textit{spectral fractional Dirichlet Laplacian} (sometimes called the 
Navier fractional Laplacian) of order $s>0$ is the positive self-adjoint 
operator with quadratic form
\begin{equation}\label{eq:forms}
 Q^{\dsp}_s[v]=\sum_{j\ge1}\lambda_j^{\,s}\ip{v}{\varphi_j}_{L_2}^2,
 \qquad
\dd(Q^{\dsp}_s)=\{v\in L_2(\Om):Q^{\dsp}_s[v]<\infty\}.
\end{equation}

We recall the definitions of the classical Sobolev--Slobodetskii spaces
in $\mathbb{R}^{n}$ (see \cite[Subsection 2.3.3]{Tr78} or \cite{DNPV}),
\begin{equation*}
H^{s}(\mathbb{R}^{n})=\bigl\{u\in {
\mathcal{S}}'(\mathbb{R}^{n}):\bigl(1+|
\xi |^{2}\bigr)^{
\frac{s}{2}} {\mathcal{F}}u(\xi )\in
L_{2}(\mathbb{R}^{n})\bigr\},
\end{equation*}
and corresponding spaces in $\Omega $ (see \cite[Subsection 4.2.1]{Tr78} 
and \cite[Subsection 
4.3.2]{Tr78}),
\begin{equation*}
H^{s}(\Omega )=\bigl\{u |_{\Omega }:u\in
H^{s}(\mathbb{R}^{n})\bigr\}; \quad
\widetilde{H}^{s}(\Omega )=\bigl\{u\in H^{s}( \mathbb{R}^{n}): 
\operatorname{supp} 
(u)\subset \overline{\Omega
}\bigr\}.
\end{equation*}

We also recall that another fractional Dirichlet Laplacian, namely, the 
\textit{restricted} one is defined by its quadratic form
\begin{equation}\label{Qdr}
Q^{\dr}_s[v]=\int\limits_{\R^n}|\xi|^{2s}|(\mathcal Fv)(\xi)|^2d\xi, \qquad 
\dd(Q^{\dr}_s)=\Ht^s(\Om).
\end{equation}
Notice that (see \cite[Lemma~1]{MN14} and 
\cite[Lemma~2]{MN16})
\begin{equation*}
 \widetilde{H}^{s}(\Omega )=\operatorname{Dom}
(Q_{s}^{\mathrm{DSp}}),\quad s\in(0, \tfrac{3}{2});
\qquad  \widetilde{H}^{s}(\Omega )\subsetneq \operatorname{Dom}
(Q_{s}^{ \mathrm{DSp}}), \quad s\ge \tfrac{3}{2}.
\end{equation*}

Finally, we write $u^\pm=\frac12(|u|\pm u)$. It is easy to see that $u^+u^-=0$, 
and $u$ is sign-changing if and only if simultaneously both $u^+\ne0$ and 
$u^-\ne0$ in $L_2(\Om)$. 

\medskip

It is well known that
if $v\in \widetilde H^1(\Omega)=H^1_0(\Omega)$, then $|v|\in H^1_0(\Omega)$ 
with $\nabla|v|=(\operatorname{sgn}v)\nabla v$
a.e., see \cite[Lemma 7.6]{GT}, so for any $u\in\widetilde{H}^{1}(\Omega )$ we 
have
\begin{equation*}
Q_1^{\mathrm{DR}}[u]=Q_1^{\mathrm{DSp}}[u]=\int\limits_{\Omega} |\nabla u|^2 = 
\int\limits_{\Omega} |\nabla |u||^2 =
 Q_1^{\mathrm{DR}}[|u|]=Q_1^{\mathrm{DSp}}[|u|].
\end{equation*}
In contrast, for $s\in(0,1)$ one has (see \cite[Theorem 3]{MN15})
\begin{equation}\label{eq:thm6}
 Q_s^{\mathrm{DR}}[u]>Q_s^{\mathrm{DR}}[|u|]; 
\qquad Q_s^{\mathrm{DSp}}[u]>Q_s^{\mathrm{DSp}}[|u|]
\end{equation}
for all sign-changing $u\in\widetilde{H}^{s}(\Omega )$.

What happens for $s>1$? If $s\in (1,\frac{3}{2})$ then the operator
$u\mapsto |u|$ is a bounded transform of $H^{s}(\mathbb{R}^{n})$ into itself,
see, e.g., \cite[Section 4]{BS11}.\footnote{To the best of our knowledge, its 
continuity
is still an open problem.} Moreover, it is easy to show that the assumption
$s<\frac{3}{2}$ cannot be improved, see, e.g., \cite[Example~1]{MN19}.

For $s\in (1,\frac{3}{2})$ the restricted form satisfies the opposite 
inequality, see
\cite[Theorem 1]{MN19}:
 for sign-changing $u\in\Ht^s(\Om)$,
\begin{equation}\label{eq:thm7}
 Q^{\dr}_s[u]<Q^{\dr}_s[|u|],\qquad s\in(1,\tfrac32).
\end{equation}
In \cite[Conjecture 8]{Naz21} it was conjectured that inequality similar to 
\eqref{eq:thm7} holds for $Q^{\dsp}_s$.
Here we prove a more general statement.

\begin{theorem}\label{th:main}
Let $s>0$, $s\notin\N$, and let $f,g\in \dd(Q^{\dsp}_s)$ be non-negative 
non-trivial functions such that $fg=0$ a.e. in $\Omega$. Then
$$
\begin{array}{lll}
Q^{\dsp}_s[f+g]<Q^{\dsp}_s[f-g]
& \text{ if ~~$\lfloor s\rfloor$}&\text{ is~ even;}
\\
Q^{\dsp}_s[f+g]>Q^{\dsp}_s[f-g]
& \text{ if ~~$\lfloor s\rfloor$}&\text{ is~ odd}.
\end{array}
$$
If in addition $f,g\in \dd(Q^{\dr}_s)=\Ht^s(\Om)$ then the same inequalities 
hold for $Q^{\dr}_s$.
\end{theorem}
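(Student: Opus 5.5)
Since $Q[f+g]-Q[f-g]=4B(f,g)$, where $B$ is the bilinear form associated with $Q=Q^{\dsp}_s$, the statement is equivalent to a strict sign of $B(f,g)$: $B(f,g)<0$ for $\lfloor s\rfloor$ even and $B(f,g)>0$ for $\lfloor s\rfloor$ odd. I will write $s=k+\sigma$ with $k=\lfloor s\rfloor$ and $\sigma\in(0,1)$. The plan is a Balakrishnan-type integral representation of $\lambda^s$ that turns $B(f,g)$ into an integral of quantities with a definite sign. For $\sigma\in(0,1)$ and $\lambda>0$, the identity
\begin{equation*}
\lambda^{\sigma}=c_\sigma\int_0^\infty \bigl(1-e^{-t\lambda}\bigr)\,t^{-1-\sigma}\,dt,\qquad c_\sigma>0,
\end{equation*}
generalizes to order $s$ by subtracting the Taylor polynomial of $e^{-t\lambda}$ of degree $k$:
\begin{equation*}
\lambda^{s}=c_{s}\int_0^\infty \Bigl(e^{-t\lambda}-\sum_{m=0}^{k}\frac{(-t\lambda)^m}{m!}\Bigr)\,t^{-1-s}\,dt .
\end{equation*}
Here the integral converges at $0$ because the bracket is $O(t^{k+1})$, and at $\infty$ because the bracket is $O(t^{k})$. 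The sign of $c_s$ is $(-1)^{k+1}$, since it is governed by $\Gamma(-s)$.

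Step 1. Plug this into $B(f,g)=\sum_j\lambda_j^s(f,\varphi_j)(g,\varphi_j)$. The polynomial terms give $\sum_j\lambda_j^m(f,\varphi_j)(g,\varphi_j)$, which formally equals $\int_\Omega f\,(-\Delta)^m g=0$ for $m\le k$, because $fg=0$ and local operators preserve disjoint supports. This is only formal, so I will justify it for smooth, disjointly supported approximants and then pass to the limit; alternatively I will work with a regularization of the representation, described in Step 3. What remains is
\begin{equation*}
B(f,g)=c_s\int_0^\infty (e^{t\Delta}f,g)_{L_2}\,t^{-1-s}\,dt,
\end{equation*}
with $e^{t\Delta}$ the Dirichlet heat semigroup. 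Its kernel $p_t(x,y)$ is strictly positive on $\Omega\times\Omega$ for connected $\Omega$. Hence $(e^{t\Delta}f,g)>0$ for every $t>0$ when $f,g\ge0$ are nontrivial. The sign of $B(f,g)$ is then that of $c_s$, i.e. $(-1)^{k+1}$. This gives $B<0$ for even $k$ and $B>0$ for odd $k$, as claimed, and strictness comes from the positivity of $p_t$.

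Step 2 is the restricted Laplacian $Q^{\dr}_s$. I will do the same with $|\xi|^{2s}$ in place of $\lambda$, using the whole-space heat kernel, which is a positive Gaussian. The polynomial terms $\int f\,(-\Delta)^m g$ again vanish by disjointness.

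Step 3 is the main obstacle: making the vanishing of the polynomial terms and the interchange of sum and integral rigorous. The difficulty is that the integrand contains the individually divergent piece $\sum_j\lambda_j^m(f,\varphi_j)(g,\varphi_j)$ when $m>s/2$ is not controlled by the domain, which happens for large $k$. I will split the $t$-integral at $t=1$. On $(0,1)$ I will use the identity $\bigl(e^{t\Delta}f,g\bigr)=\bigl(e^{t\Delta}f-\sum_{m\le k}\tfrac{t^m\Delta^m}{m!}f,\;g\bigr)$. To justify it, I will first show that $\int_0^1 t^{-1-s}(e^{t\Delta}f,g)\,dt$ converges absolutely. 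For this I will use off-diagonal smallness of $p_t$ for disjointly supported functions: if $\operatorname{dist}(\operatorname{supp}f,\operatorname{supp}g)>0$, then $(e^{t\Delta}f,g)=O(e^{-c/t})$. For general $f,g$ with only $fg=0$, I will approximate them by such pairs: truncate $f$ and $g$ to the sets $\{f>\varepsilon\}$ and $\{g>\varepsilon\}$ and mollify, keeping the approximants in $\dd(Q_s)$. I will then pass to the limit using continuity of $B$ on $\dd(Q^{\dsp}_s)$. This gives the non-strict inequality in the limit via Fatou-type arguments. Strictness will then follow by applying the formula directly to $f,g$ themselves, using the positivity of the integrand.
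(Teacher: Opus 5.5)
Your overall scheme is the paper's: the Taylor-subtracted Balakrishnan formula with $c_s=1/\Gamma(-s)$ (Lemma~\ref{lem:bala}), vanishing of the polynomial terms (Lemma~\ref{lem:vanish}), and positivity of the Dirichlet heat semigroup (Lemma~\ref{lem:pos}). The gap is in Step~3. The obstacle you identify there does not exist, and the approximation you propose to overcome it cannot be carried out.

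For $m\le k<s$ one has $\lambda_j^m\le\lambda_1^{m-s}\lambda_j^s$, so $\sum_j\lambda_j^m|f_jg_j|\le\lambda_1^{m-s}\bigl(Q^{\dsp}_s[f]\,Q^{\dsp}_s[g]\bigr)^{1/2}<\infty$. This sum is simply $(A^{m/2}f,A^{m/2}g)_{L_2}$, with $A$ the Dirichlet Laplacian, and it needs only $m\le s$, not $2m\le s$. The interchange of sum and $t$-integral is then plain Fubini, because $R_k$ has constant sign on $(0,\infty)$: $\sum_j\int_0^\infty|R_k(t\lambda_j)|\,t^{-1-s}\,dt\,|f_jg_j|=|\Gamma(-s)|\sum_j\lambda_j^s|f_jg_j|<\infty$. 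So you need no splitting at $t=1$, no off-diagonal bounds and no approximation.

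The only thing to prove is $(A^{m/2}f,A^{m/2}g)=0$, and this is done directly on $f,g$. They lie in $H^m_{\mathrm{loc}}(\Om)$. The pairing equals $\int(-\Delta)^{m/2}f\,(-\Delta)^{m/2}g$ or $\int\nabla(-\Delta)^{(m-1)/2}f\cdot\nabla(-\Delta)^{(m-1)/2}g$. Iterating the fact that $\nabla w=0$ a.e.\ on $\{w=0\}$ (Gilbarg--Trudinger, Lemma~7.7) shows that all derivatives of $f$ of order $\le m$ vanish a.e.\ on $\{f=0\}$, and likewise for $g$. Since $\{f=0\}\cup\{g=0\}$ has full measure, the pairing vanishes.

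Your approximation, by contrast, fails for $s>\frac32$. Truncating to $\{f>\varepsilon\}$ and mollifying gives functions compactly supported in $\Om$, and these are not dense in $\dd(Q^{\dsp}_s)$. Take $\Om=(0,3)$, $s\in(\frac32,\frac52)$, $f(x)=x(1-x)^2$ on $[0,1]$, $f=0$ on $[1,3]$, and $g(x)=f(3-x)$. They satisfy all hypotheses, since the sine coefficients of $f$ are $O(j^{-3})$. However, $u\mapsto u'(0)$ is continuous on $\dd(Q^{\dsp}_s)$, because $\sum_j j|u_j|\le C\,Q^{\dsp}_s[u]^{1/2}$ for $s>\frac32$. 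This functional vanishes on compactly supported functions and equals $1$ at $f$.

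Hence $B(f_\varepsilon,g_\varepsilon)\to B(f,g)$ cannot come from continuity of $B$. For $s>2$ the same obstruction already blocks the $m=2$ polynomial term, which would need convergence in $\dd(Q^{\dsp}_2)=H^2\cap H^1_0$.

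Your final strictness step is also circular as phrased: if the formula could be applied to $f,g$ directly, no approximation would be needed. Within an approximation argument, Fatou alone would already give strictness, since $(e^{t\Delta}f_\varepsilon,g_\varepsilon)\ge0$ tends to $(e^{t\Delta}f,g)>0$ for every $t>0$.
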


\begin{remark}\label{rem:conj}
 The last statement of Theorem \ref{th:main} was in fact conjectured in 
\cite[Remark 1]{MN19}.
\end{remark}

\begin{corollary}\label{cor:conj}
Let $s\in(1,\frac32)$. If $u\in\Ht^s(\Om)$ is sign-changing, then 
$Q^{\dsp}_s[u]<Q^{\dsp}_s[|u|]$.
\end{corollary}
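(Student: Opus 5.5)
The corollary is the case $f=u^+$, $g=u^-$ of Theorem~\ref{th:main}. The only thing to check beyond the theorem is that $u^\pm$ lie in $\dd(Q^{\dsp}_s)$.

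\emph{Step 1: $u^\pm\in\dd(Q^{\dsp}_s)$.} Since $u\in\Ht^s(\Om)\subset H^s(\R^n)$ and $s\in(1,\frac32)$, the map $v\mapsto|v|$ is a bounded transform of $H^s(\R^n)$ into itself \cite[Section 4]{BS11}. Hence $|u|\in H^s(\R^n)$. Moreover $\operatorname{supp}|u|=\operatorname{supp}u\subset\overline\Om$, so $|u|\in\Ht^s(\Om)$. Consequently $u^\pm=\frac12(|u|\pm u)\in\Ht^s(\Om)$, because $\Ht^s(\Om)$ is a linear space. For $s\in(0,\frac32)$ we have $\Ht^s(\Om)=\dd(Q^{\dsp}_s)$ by \cite[Lemma~1]{MN14} and \cite[Lemma~2]{MN16}. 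So $u^\pm\in\dd(Q^{\dsp}_s)$.

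\emph{Step 2: the remaining hypotheses of Theorem~\ref{th:main}.} The functions $f=u^+$ and $g=u^-$ are non-negative, and $fg=u^+u^-=0$ a.e. Since $u$ is sign-changing, both $u^+\ne0$ and $u^-\ne0$ in $L_2(\Om)$, so $f$ and $g$ are non-trivial. Also $s\in(1,\frac32)$ gives $s\notin\N$ and $\lfloor s\rfloor=1$, which is odd.

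\emph{Step 3: conclusion.} Theorem~\ref{th:main} in the odd case gives $Q^{\dsp}_s[f+g]>Q^{\dsp}_s[f-g]$. Since $f+g=u^++u^-=|u|$ and $f-g=u^+-u^-=u$, this reads $Q^{\dsp}_s[|u|]>Q^{\dsp}_s[u]$, which is the claim.

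There is no real obstacle here. The only nontrivial input is the regularity $|u|\in H^s(\R^n)$ from \cite{BS11}, and this is exactly where the restriction $s<\frac32$ is needed.
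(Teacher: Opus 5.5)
Your proposal is correct and follows the paper's proof: you apply Theorem~\ref{th:main} with $f=u^+$, $g=u^-$, using \cite{BS11} to get $u^\pm\in\Ht^s(\Om)$ and the identification $\Ht^s(\Om)=\dd(Q^{\dsp}_s)$ for $s<\frac32$. Your explicit checks of the support condition, of $fg=0$, and of the parity $\lfloor s\rfloor=1$ spell out what the paper leaves implicit.
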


\begin{remark}\label{rem:LLM}
 The original proof of Corollary \ref{cor:conj} via Lemma \ref{lem:bala} (for 
$m=1$) was given by the LLM Claude (Anthropic), which was directed jointly by 
E.I., P.N., and A.T. 
\end{remark}

The structure of the paper is as follows. In Section 2 we give an 
elementary proof of Theorem \ref{th:main} and Corollary \ref{cor:conj} for 
spectral quadratic form. In Section 3 we 
prove Theorem \ref{th:main} for restricted quadratic form and discuss possible 
generalizations.

\section{The form $Q^{\dsp}_s$}

Since the spectrum of the Dirichlet Laplacian in $\Omega$ is discrete, this 
case is more elementary. For the brevity we denote
$f_j=(f,\varphi_j)_{L_2}$. First we prove an important auxiliary statement.

\begin{lemma}\label{lem:bala}
Let $s>0$, $s\notin\N$. Denote $m=\lfloor s\rfloor$ and define the function
$$
R_m(t)=e^{-t}-\sum\limits_{k=0}^m \frac{(-1)^kt^k}{k!}.
$$ 
Then $(-1)^m R_m(t)<0$ on $(0,+\infty)$, and
\begin{equation}\label{eq:bala}
\int\limits_0^\infty 
R(t\lambda)\,\frac{dt}{t^{1+s}}= \Gamma(-s)\lambda^{s},
 \qquad \lambda\ge0 .
\end{equation}
\end{lemma}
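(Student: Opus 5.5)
The plan has two parts: a sign statement for $R_m$, and then the integral identity \eqref{eq:bala}, obtained by scaling and integration by parts. Here $R$ in \eqref{eq:bala} means $R_m$.

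\textbf{Sign of $R_m$.} We use Taylor's formula with integral remainder for $e^{-t}$ at $0$. Since $\frac{d^{m+1}}{dt^{m+1}}e^{-t}=(-1)^{m+1}e^{-t}$, we get
$$
R_m(t)=\frac{(-1)^{m+1}}{m!}\int_0^t (t-\tau)^m e^{-\tau}\,d\tau .
$$
Hence $(-1)^mR_m(t)=-\frac1{m!}\int_0^t(t-\tau)^me^{-\tau}d\tau<0$ for $t>0$. The same formula yields the asymptotics needed for convergence:

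1. Near $0$: $|R_m(t)|\le t^{m+1}/(m+1)!$, and $m+1>s$, so $\int_0 |R_m|t^{-1-s}\,dt<\infty$.
2. Near $\infty$: $|R_m(t)|=O(t^m)$ (for $m=0$ it is $O(1)$), and $m<s$ because $s\notin\N$, so $\int^\infty |R_m|t^{-1-s}\,dt<\infty$.

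\textbf{Reduction to $\lambda=1$.} For $\lambda=0$ both sides vanish, since $R_m(0)=0$ and $\lambda^s=0$. For $\lambda>0$ we substitute $t\lambda=\tau$:
$$
\int_0^\infty R_m(t\lambda)\,t^{-1-s}dt=\lambda^s I_m(s),\qquad I_m(s)=\int_0^\infty R_m(\tau)\tau^{-1-s}d\tau .
$$
It therefore remains to show $I_m(s)=\Gamma(-s)$ for $m<s<m+1$.

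\textbf{Computing $I_m(s)$.} We integrate by parts, using $R_m'=-R_{m-1}$ for $m\ge1$ and $R_0'(t)=-e^{-t}$:
$$
I_m(s)=\Bigl[-\tfrac{1}{s}R_m(\tau)\tau^{-s}\Bigr]_0^\infty+\frac1s\int_0^\infty R_m'(\tau)\tau^{-s}d\tau=-\frac1s\,I_{m-1}(s-1).
$$
The boundary terms vanish by the asymptotics above, since $R_m=O(\tau^{m+1})$ at $0$ and $O(\tau^m)$ at $\infty$, with $m<s<m+1$. The index pair $(m-1,s-1)$ again satisfies $m-1<s-1<m$, so the integral $I_{m-1}(s-1)$ converges. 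Iterating $m$ times reduces the problem to $I_0(\sigma)$ with $\sigma=s-m\in(0,1)$.

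\textbf{The base case and the main obstacle.} The base case is where the actual value $\Gamma(-\sigma)$ appears, so it is the one step needing genuine care. We integrate by parts once more:
$$
I_0(\sigma)=\int_0^\infty(e^{-\tau}-1)\tau^{-1-\sigma}d\tau=-\frac1\sigma\int_0^\infty e^{-\tau}\tau^{-\sigma}d\tau=-\frac{\Gamma(1-\sigma)}{\sigma}=\Gamma(-\sigma).
$$
The boundary terms vanish because $e^{-\tau}-1=O(\tau)$ at $0$ and is bounded at $\infty$. Unwinding the recursion gives
$$
I_m(s)=\frac{(-1)^m\,\Gamma(-\sigma)}{s(s-1)\cdots(s-m+1)}.
$$
By the functional equation $\Gamma(z+1)=z\Gamma(z)$,
$$
\Gamma(-\sigma)=\Gamma(-s+m)=(-s+m-1)(-s+m-2)\cdots(-s)\,\Gamma(-s)=(-1)^m(s-m+1)\cdots s\;\Gamma(-s).
$$
Substituting, the product $s(s-1)\cdots(s-m+1)$ and the factor $(-1)^m$ cancel, so $I_m(s)=\Gamma(-s)$, which proves \eqref{eq:bala}.

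As a consistency check, $\operatorname{sgn}\Gamma(-s)=(-1)^{m+1}$, which agrees with the sign of $R_m$ established in the first step.
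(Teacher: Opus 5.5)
Your proof is correct and is essentially the paper's: rescale to $\lambda=1$, integrate by parts $m+1$ times using $R_k'=-R_{k-1}$ down to $\Gamma(m+1-s)$, and finish with $\Gamma(z+1)=z\Gamma(z)$. Your base-case integral $\int_0^\infty e^{-\tau}\tau^{-\sigma}\,d\tau$ is the right one; the exponent $1+(s-m)$ in the paper's corresponding display is a typo. The only difference is that you get the sign of $R_m$ from Taylor's integral remainder rather than by induction from $R_k(0)=0$, $R_k'=-R_{k-1}$, and these are the same fact in integrated form.
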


\begin{proof}
The first statement is evident for $m=0$ and follows by induction from the 
relations
$$
R_k(0)=0,\qquad R_k'(t)=-R_{k-1}(t).
$$
Next, it suffices to prove \eqref{eq:bala} for $\lambda=1$. The integrand in 
\eqref{eq:bala} is $O(t^{m-s})$ at zero and $O(t^{m-s-1})$ at infinity, so we 
can integrate by parts:
\begin{multline*}
 \int\limits_0^\infty R_m(t)\,\frac{dt}{t^{1+s}} =\frac 
{-1}s\int\limits_0^\infty R_{m-1}(t)\,\frac{dt}{t^{1+(s-1)}}
 =\dots \\
 =\frac {(-1)^{m+1}}{s(s-1)\dots(s-m)}\int\limits_0^\infty 
\frac{e^{-t}\,dt}{t^{1+(s-m)}}
=\frac 
{(-1)^{m+1}\Gamma(m+1-s)}{s(s-1)\dots(s-m)}=\Gamma(-s).
\end{multline*}
\end{proof}

\begin{lemma}\label{lem:pos}
Let $f,g\in L_2(\Om)$, $f,g\ge0$, $f\ne0$, $g\ne0$. Then for every $t>0$ we have
$$
\sum_{j\ge1}e^{-t\lambda_j}f_jg_j>0.
$$
\end{lemma}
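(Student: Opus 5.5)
The sum $\sum_j e^{-t\lambda_j}f_jg_j$ is exactly $(e^{t\Delta}f,g)_{L_2}$, where $e^{t\Delta}$ is the Dirichlet heat semigroup in $\Om$. Indeed, by Parseval, $e^{t\Delta}f=\sum_j e^{-t\lambda_j}f_j\varphi_j$. So I would write
$$
\sum_{j\ge1}e^{-t\lambda_j}f_jg_j=\int\limits_\Om\int\limits_\Om p_t(x,y)f(y)g(x)\,dy\,dx,
$$
where $p_t(x,y)=\sum_j e^{-t\lambda_j}\varphi_j(x)\varphi_j(y)$ is the Dirichlet heat kernel. For $t>0$ this series converges absolutely and locally uniformly. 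This follows from Weyl asymptotics $\lambda_j\asymp j^{2/n}$ together with the bound $\|\varphi_j\|_\infty\le C\lambda_j^{n/4}$, which comes from ultracontractivity. Alternatively, the integral representation can be justified directly by the $L_2$-boundedness of the semigroup.

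The key step is strict positivity of the kernel: $p_t(x,y)>0$ for all $x,y\in\Om$ and $t>0$. I would obtain it from the strong maximum principle for the heat equation on a connected domain. First, $p_t\ge0$ by the weak maximum principle, or equivalently because the Dirichlet form is Markovian: $|u|$ lies in $H^1_0$ and has the same energy as $u$. Then $p_t(\cdot,y)$ is a nonnegative, nontrivial solution of the heat equation in $\Om\times(0,\infty)$, so the parabolic strong maximum principle (or Harnack inequality) gives $p_t(x,y)>0$ throughout. Connectedness of $\Om$ is essential here, and it is part of "domain." An alternative route to the same conclusion is to use that $e^{t\Delta}$ is positivity improving, which is equivalent to irreducibility of the Dirichlet form on a connected open set.

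With $p_t>0$ in hand, the integrand $p_t(x,y)f(y)g(x)$ is nonnegative everywhere. It is strictly positive on the set $\{g>0\}\times\{f>0\}$, which has positive $2n$-dimensional measure because both $f$ and $g$ are nonzero. Hence the integral is strictly positive. Note that the hypothesis $fg=0$ is not needed for this lemma.

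The main obstacle is rigorously justifying the strict positivity of the heat kernel. A smaller point is passing from the spectral series to the kernel integral. To avoid pointwise kernel issues, I could instead argue on $L_2$: set $h=e^{t\Delta}f$. Then $h$ is smooth in $\Om$, nonnegative, and not identically zero for small $t$, since $h\to f$ in $L_2$ as $t\to0$. The strong maximum principle applied to $u(\tau)=e^{\tau\Delta}f$ on $\Om\times(0,\infty)$ then gives $h>0$ in $\Om$, and therefore $(h,g)>0$.
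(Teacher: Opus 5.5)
Your proposal is correct and is essentially the paper's argument. The paper writes the sum as $\int_\Om V(x,t)g(x)\,dx$, with $V=e^{t\Delta}f$ the Dirichlet heat solution, and applies the parabolic strong maximum principle to get $V>0$; this is exactly your final $L_2$ variant. Your heat-kernel route via $p_t>0$ uses the same strong maximum principle and only adds a justification of the kernel expansion, which is harmless but not needed.
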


\begin{proof}
It is evident that
$$
\sum_{j\ge1}e^{-t\lambda_j}f_jg_j=\int\limits_{\Om}V(x,t)g(x)\,dx,\qquad 
V(x,t)=\sum_{j\ge1}e^{-t\lambda_j}f_j\varphi_j(x).
$$
Next, it is well known that $V(x,t)$ is the solution of the initial-boundary 
value problem for the heat equation
$$
\partial_tV-\Delta V=0\quad \mbox{in} \ \ \Om\times (0,+\infty), \qquad 
V|_{x\in\partial\Om}=0, \qquad V|_{t=0}=f(x). 
$$
By the strong maximum principle (see, e.g., \cite[Ch.3, Theorem 5]{PW}), $V>0$ 
in $(0,\infty)\times\Om$, and the statement follows.
\end{proof}

\begin{lemma}\label{lem:vanish}
Let $s>0$, and let $f$ and $g$ satisfy the assumptions of Theorem 
\ref{th:main}. Then for every $k=0,\dots, \lfloor s\rfloor$ we have
$$
\sum_{j\ge1}\lambda_j^kf_jg_j=0.
$$
\end{lemma}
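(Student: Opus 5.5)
The plan is to interpret $\sum_j\lambda_j^k f_jg_j$ as an $L_2$ pairing of $g$ with a power of the Dirichlet Laplacian applied to $f$, and to exploit disjointness of supports. For $k=0$ the claim is immediate: by Parseval, $\sum_j f_jg_j=(f,g)_{L_2}=\int_\Om fg=0$. For general $k\le m=\lfloor s\rfloor$ I would avoid pointwise operator identities and work with the heat semigroup instead. First, the series converges absolutely: $f,g\in\dd(Q^{\dsp}_s)$ gives $\sum_j\lambda_j^s f_j^2<\infty$ and the same for $g$, and $k\le s$, so Cauchy--Schwarz applies (the $\lambda_j$ are bounded below by $\lambda_1>0$).

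Next I would set $\Phi(t)=\sum_j e^{-t\lambda_j}f_jg_j=\int_\Om V(x,t)g(x)\,dx$, with $V$ as in Lemma~\ref{lem:pos}. Then $\Phi(0)=0$, $\Phi(t)>0$ for $t>0$ by Lemma~\ref{lem:pos}, and $\Phi$ is $C^k$ on $[0,\infty)$ with
$$
\Phi^{(k)}(0)=(-1)^k\sum_j\lambda_j^kf_jg_j,
$$
by dominated convergence, which is justified by the absolute convergence above. This reduces the lemma to showing $\Phi^{(k)}(0)=0$ for $k\le m$. I would argue by induction on $k$. Suppose $\Phi^{(i)}(0)=0$ for all $i<k$. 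Taylor's formula then gives $\Phi(t)=\Phi^{(k)}(0)t^k/k!+o(t^k)$. Since $\Phi>0$, this yields $(-1)^k\sum_j\lambda_j^kf_jg_j\ge0$, so only one sign is excluded. To get equality, I would apply the same argument to the pair $(f,-g)$ instead of $(f,g)$. That pair is not admissible for Lemma~\ref{lem:pos}, so the missing inequality has to come from elsewhere.

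The natural substitute is a symmetric positivity statement. Since $fg=0$, we have $(f+g)^2=(f-g)^2$ pointwise. The reverse inequality should then come from a genuine locality fact: for $k\le m$ (with $k<s$, or $k=s$ when $s$ is an integer), the form $\sum_j\lambda_j^k v_j^2$ equals $\int_\Om|\nabla^{k}v|^2$-type local expressions, namely $\|\Delta^{k/2}v\|^2$ or $\|\nabla\Delta^{(k-1)/2}v\|^2$, for $v\in\dd(Q^{\dsp}_k)$. Polarization then gives
$$
\sum_j\lambda_j^kf_jg_j=\int_\Om L_kf\cdot L_kg,
$$
with $L_k$ a local differential operator, either $\Delta^{k/2}$ or $\nabla\Delta^{(k-1)/2}$. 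Because $f$ and $g$ lie in $H^k$ and vanish on complementary sets ($f=0$ a.e.\ where $g>0$ and vice versa), the Sobolev fact that $D^\alpha w=0$ a.e.\ on $\{w=0\}$ for $w\in H^{|\alpha|}$ forces $L_kf\cdot L_kg=0$ a.e., and the sum vanishes.

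I expect the main obstacle to be this identification of $\dd(Q^{\dsp}_k)$ and of the form with the local expression for $k\ge2$. This holds when $\Om$ is regular enough to have $H^2$ regularity; for merely Lipschitz $\Om$ I would instead use that $\sum_j\lambda_j^k f_j\varphi_j$ converges in $L_2$ to $(-\Delta)^kf$ in the distributional sense inside $\Om$. Here the coefficients $\lambda_j^k f_j$ are square-summable when $2k\le s$, and for $k\le s<2k$ one splits the power as $\lambda^{k/2}\cdot\lambda^{k/2}$ and pairs the two halves. The pointwise vanishing of derivatives on level sets is then applied to these intermediate objects.
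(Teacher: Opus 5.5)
Your proposal is correct, and the argument that actually carries it is exactly the paper's proof: polarize $\sum_j\lambda_j^kv_j^2=\|\Delta^{k/2}v\|^2$ (resp.\ $\|\nabla\Delta^{(k-1)/2}v\|^2$), then use the iterated Gilbarg--Trudinger fact that derivatives vanish a.e.\ on the zero set, so $L_kf\cdot L_kg=0$ a.e.\ since $fg=0$; the heat-semigroup/Taylor part is an unnecessary detour, since it only yields one sign, as you observed. For Lipschitz $\Om$, the step you leave vague is simply that $f,g\in H^k_{\mathrm{loc}}(\Om)$ by interior elliptic regularity, which is all the pointwise-vanishing argument needs, so no boundary regularity is required.
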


\begin{proof}
Since $f,g\in\dd(Q^{\dsp}_s)$, it is easy to see that for $k=0,\dots, \lfloor 
s\rfloor$
\begin{equation}\label{eq:vanish}
\sum_{j\ge1}\lambda_j^kf_jg_j=
\begin{cases}
 \int\limits_{\Om}(-\Delta)^{\frac k2}f(x)\, (-\Delta)^{\frac 
k2}g(x)\,dx, & \text{ if ~~$k$~ is~ even},\\
 \int\limits_{\Om}\nabla(-\Delta)^{\frac {k-1\vphantom{1^1}}2}f(x)\, 
\nabla(-\Delta)^{\frac {k-1}2}g(x)\,dx, & \text{ if ~~$k$~ is~ 
odd}.
\end{cases}
\end{equation}
By \cite[Lemma 7.7]{GT}, $\nabla f=0$ a.e. on the set where $f=0$. Since 
$fg=0$ a.e. in $\Om$, scalar products in the right-hand side of 
\eqref{eq:vanish} vanish. 
 \end{proof}

\begin{proof}[Proof of Theorem \ref{th:main} for the form $Q^{\dsp}_s$]
It is easy to see that  
\begin{equation}\label{eq:diff}
Q^{\dsp}_s[f+g]-Q^{\dsp}_s[f-g]=4\sum_{j\ge1}\lambda_j^sf_jg_j.
\end{equation}
We use subsequently Lemma \ref{lem:bala}, Fubini's theorem and Lemma 
\ref{lem:vanish} to get
\begin{multline*}
 \sum_{j\ge1}\lambda_j^sf_jg_j=\frac 
1{\Gamma(-s)}\sum_{j\ge1}\int\limits_0^\infty 
R(t\lambda_j)\,\frac{dt}{t^{1+s}}\,f_jg_j
\\
=\frac 1{\Gamma(-s)}\int\limits_0
^\infty \sum_{j\ge1}
R(t\lambda_j)f_jg_j\,\frac{dt}{t^{1+s}}
=\frac 1{\Gamma(-s)}\int\limits_0
^\infty \sum_{j\ge1}
e^{-t\lambda_j}f_jg_j\,\frac{dt}{t^{1+s}}.
\end{multline*}
By Lemma \ref{lem:pos}, the last integral is positive. Thus, the sign of 
the left-hand side in \eqref{eq:diff} coincides with the sign of $\Gamma(-s)$, 
and the statement follows.
\end{proof}

\begin{proof}[Proof of Corollary \ref{cor:conj}]
As was mentioned in the Introduction, the map $v\mapsto|v|$ takes $H^s(\R^n)$ 
into itself for
$s\in(1,\frac32)$. Therefore, $u^\pm\in\Ht^s(\Om)$ for $u\in\Ht^s(\Om)$.

Next, we recall that $\dd(Q^{\dsp}_s)=\Ht^s(\Om)$ for
$0<s<\frac32$ on a bounded Lipschitz domain. So, Theorem \ref{th:main} is 
applicable with $f=u^+$, $g=u^-$.
\end{proof}

\section{The form $Q^{\dr}_s$ and generalizations}

Notice that the form $Q^{\dr}_s$ is simply quadratic form of the conventional 
fractional Laplacian $-\Delta_{\R^n}^s$ restricted to $\Ht^s(\Om)$. Since the 
spectrum of the Laplacian in $\R^n$ is purely continuous, we recall some facts 
from the theory of spectral measure (see \cite[Ch.~5]{BS87}).

The spectral measure of 
$-\Delta_{\R^n}$ is the projector-valued measure $E$ on $\R$ supported on 
$[0,\infty)$,
such that
$$
-\Delta_{\R^n}=\int\limits_{[0,\infty)} \lambda ~\!dE(\lambda).
$$
Let $f,g\in L_2(\R^n)$. For a Borel set $B$,  set 
$$
\rho_{f,g}(B):=\ip{E(B)f}{g}_{L_2}.
$$
Then $\rho$ is a finite signed Borel measure on $[0,\infty)$, bilinear and 
symmetric in $f,g$, and
$$
\rho_{f,g}([0,\infty))=\ip{f}{g}_{L_2}\quad \mbox{and}\quad
\int\limits_{[0,\infty)}\lambda^s\,d\rho_{f,f}(\lambda)=Q_s^{\dr}[f]\quad 
\mbox{for}\quad f\in H^s(\R^n).
$$ 

\begin{lemma}\label{lem:pos1}
Let $f,g\in L_2(\R^n)$, $f,g\ge0$, $f\ne0$, $g\ne0$. Then for every $t>0$ we 
have
$$
\int\limits_{[0,\infty)}e^{-t\lambda}\,d\rho_{f,g}(\lambda)>0.
$$
\end{lemma}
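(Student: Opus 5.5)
The plan is to identify the integral with the heat semigroup on $\R^n$ and then use the positivity of the Gauss--Weierstrass kernel, exactly as the maximum principle was used in Lemma \ref{lem:pos}. By the functional calculus for the spectral measure $E$ of $-\Delta_{\R^n}$,
$$
\int\limits_{[0,\infty)}e^{-t\lambda}\,d\rho_{f,g}(\lambda)=\bigl(e^{t\Delta_{\R^n}}f,g\bigr)_{L_2}.
$$
This holds because $e^{-t\lambda}$ is a bounded Borel function on $[0,\infty)$. For such a function $\phi$ one has $\int\phi\,d\rho_{f,g}=\ip{\phi(-\Delta_{\R^n})f}{g}_{L_2}$, which is the defining property of $\rho_{f,g}$ in \cite[Ch.~5]{BS87}.

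Next I identify $e^{t\Delta_{\R^n}}$ concretely. Via the Fourier transform, $-\Delta_{\R^n}$ is unitarily equivalent to multiplication by $|\xi|^2$, so $e^{t\Delta_{\R^n}}$ is multiplication by $e^{-t|\xi|^2}$ on the Fourier side. Hence it is convolution with the heat kernel
$$
p_t(x)=(4\pi t)^{-n/2}e^{-|x|^2/(4t)}.
$$
Thus
$$
\int\limits_{[0,\infty)}e^{-t\lambda}\,d\rho_{f,g}(\lambda)=\int\limits_{\R^n}\int\limits_{\R^n}p_t(x-y)f(y)g(x)\,dy\,dx .
$$
The double integral converges absolutely, by Young's inequality together with Cauchy--Schwarz, since $p_t\in L_1$ and $f,g\in L_2$.

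Positivity then follows directly. The kernel satisfies $p_t>0$ everywhere on $\R^n$, and $f\ge0$ with $f\ne0$ means $f>0$ on a set of positive measure. Therefore $V(x,t):=(p_t*f)(x)>0$ for every $x\in\R^n$. Because $g\ge0$ is nonzero on a set of positive measure, $\int V g\,dx>0$.

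I do not expect a real obstacle. The only point needing care is the first step: justifying that the scalar spectral measure integral equals the semigroup pairing, and that the abstract semigroup coincides with Gaussian convolution. Both are standard consequences of the spectral theorem and the Fourier representation. This is simpler than Lemma \ref{lem:pos}, since here the heat kernel is explicit and strictly positive, so no maximum principle is needed.
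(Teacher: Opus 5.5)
Your proof is correct and follows essentially the same route as the paper. The paper also writes the integral as $\int_{\R^n} V(x,t)g(x)\,dx$ with $V(\cdot,t)=\int_{[0,\infty)} e^{-t\lambda}\,dE(\lambda)f$, identifies $V$ as the Gaussian convolution of $f$, and concludes from strict positivity of the kernel; the only differences are that the paper identifies $V$ via the Cauchy problem for the heat equation rather than directly via the Fourier multiplier $e^{-t|\xi|^2}$, and your remarks on absolute convergence and on $V>0$ pointwise spell out details the paper leaves implicit.
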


\begin{proof}
Similarly to Lemma \ref{lem:pos},
$$
\int\limits_{[0,\infty)}e^{-t\lambda}\,d\rho_{f,g}(\lambda)=\int\limits_{\R^n}
V(x, t)g(x)\,dx,
\qquad 
V(\cdot,t)=\int\limits_{[0,\infty)}e^{-t\lambda}\,dE(\lambda) f(\cdot).
$$
It is well known that $V(x,t)$ is the solution of the Cauchy problem 
for the heat equation
$$
\partial_tV-\Delta V=0\quad \mbox{in} \ \ \R^n\times (0,+\infty), \qquad 
V|_{t=0}=f(x). 
$$
Thus, $V$ is the convolution of $f$ with the Gaussian kernel. So, $V>0$ 
in $(0,\infty)\times\R^n$, and the statement follows.
\end{proof}

\begin{proof}[Proof of Theorem \ref{th:main} for the form $Q^{\dr}_s$]
In this case 
\begin{equation*}
Q^{\dr}_s[f+g]-Q^{\dr}_s[f-g]=4\int\limits_{[0,\infty)}\lambda^s\,d\rho_{f,g}(
\lambda).
\end{equation*}
By Lemma \ref{lem:bala} and Fubini's theorem we have
$$
 \int\limits_{[0,\infty)}\lambda^s\,d\rho_{f,g}(
\lambda)=\frac 
1{\Gamma(-s)}\int\limits_{[0,\infty)}\int\limits_0
^\infty
R(t\lambda)\,\frac{dt}{t^{1+s}}\,d\rho_{f,g}(
\lambda)
=\frac 1{\Gamma(-s)}\int\limits_0
^\infty \int\limits_{[0,\infty)}
R(t\lambda)\,d\rho_{f,g}(
\lambda)\,\frac{dt}{t^{1+s}}.
$$
Just as Lemma \ref{lem:vanish}, the power terms in the inner integral vanish, 
and we obtain
\begin{equation}\label{eq:exp}
 \int\limits_{[0,\infty)}\lambda^s\,d\rho_{f,g}(
\lambda)=\frac 1{\Gamma(-s)}\int\limits_0
^\infty 
\int\limits_{[0,\infty)}e^{-t\lambda}\,d\rho_{f,g}(\lambda)\,\frac{dt}{t^{1+s}}.
\end{equation}
By Lemma \ref{lem:pos1}, the last integral is positive,
and the statement follows.
\end{proof}

\begin{remark}\label{rem:known}
For $s\in(1,\tfrac 32)$ and $u\in H^s(\R^n)$ sign-changing, we can put 
$f=u^+$, $g=u^-$ and obtain an alternative proof of \cite[Theorem 1]{MN19}.
\end{remark}

\begin{remark}
Let us mention briefly an analog of Theorem \ref{th:main} for negative $s$. In 
this case the corresponding statement reads as follows:
\medskip

\noindent\textit{Let $s=-\sigma<0$, and let $f,g\in L_2(\Omega)$ be 
non-negative 
non-trivial functions. Then\footnote{For the restricted quadratic form, the same 
is true under additional 
assumption $s>-\frac n2$ ensuring convergence of the integral in \eqref{Qdr}.}}
$$
Q^{\dsp}_{s}[f+g]>Q^{\dsp}_{s}[f-g].
$$

Since for $s<0$ the equality \eqref{eq:exp} follows immediately from the 
definition of Gamma-function, the proof runs without assumption $fg=0$.

However, a direct proof can be obtained from the strong maximum principles for 
conventional Dirichlet Laplacian and for spectral fractional Dirichlet 
Laplacian (see 
\cite{CDDS}). Namely, we have
$$
Q^{\dsp}_{s}[f+g]-Q^{\dsp}_{s}[f-g]=4\int\limits_{\Om}\int\limits_{\Om} 
\mathcal G^{\dsp}_\sigma(x,y) f(y) g(x)\,dydx,
$$
where $\mathcal G^{\dsp}_\sigma$ is the Green function for the spectral 
fractional 
Dirichlet Laplacian of order $\sigma$.

It is easy to see that $\mathcal G^{\dsp}_\sigma$ is the convolution of  
$\mathcal G^{\dsp}_{\sigma-m}$ and $m$ Green functions $\mathcal G$ for 
standard Dirichlet Laplacian (here $m=\lfloor \sigma\rfloor$). It is well known 
that $\mathcal G>0$ in $\Om\times\Om$, while $\mathcal G^{\dsp}_{\sigma-m}>0$ 
in $\Om\times\Om$ by \cite[Lemma 2.6]{CDDS}. So, $\mathcal 
G^{\dsp}_{\sigma}>0$ in $\Om\times\Om$, and we are done\footnote{Notice that 
the order $-\sigma$ restricted fractional Laplacian, in contrast to the 
spectral one, is not inverse to the corresponding operator of order $\sigma$. 
So, the proof via the Green function does not work in this case.}.
\end{remark}

\begin{remark}\label{rem:DR}
It is easy to see that the proof of Theorem \ref{th:main} is insensitive to the 
type of the spectral measure. So, similar statements hold true for the 
quadratic forms of spectral powers for all non-negative self-adjoint 
differential operators such that corresponding parabolic operators obey the 
strong maximum principle, for instance:
\begin{itemize}
\item Neumann and Robin Laplacians in bounded smooth domains;
\item Dirichlet Laplacian in unbounded domains;
\item self-adjoint second order elliptic operators with smooth coefficients, 
etc.  
\end{itemize}
\end{remark}

\end{document}